\documentclass[12pt]{amsart}
\usepackage{graphicx}
\usepackage{amsmath,amssymb}
\usepackage[active]{srcltx} 
\vfuzz2pt 
\hfuzz2pt 
\newtheorem{thm}{Theorem}[section]

\newtheorem{lem}[thm]{Lemma}
\newtheorem{prop}[thm]{Proposition}
\newtheorem{exa}[thm]{Example}
\theoremstyle{definition}

\theoremstyle{remark}
\newtheorem{rem}[thm]{Remark}
\numberwithin{equation}{section}

\newcommand{\Zeal}{\mathbb Z}

\begin{document}

\title[]{Coloring link diagrams by Alexander quandles}%
\author{Yongju Bae}%
\address{Department of Mathematics, College of Natural Sciences,
Kyungpook National University, Daegu 702-701, Korea}%
\email{ybae@knu.ac.kr}%

\thanks{The author would like to thank J. Scott Carter and Daniel S. Silver for the discussion and important comments in preparing the article, during his sabbatical visit at University of South Alabama}%
\subjclass[2000]{57M25}%
\keywords{knot, link, quandle, coloring, Alexander polynomial}%

\date{\today}%
\begin{abstract}
In this paper, we study the colorability of link diagrams by the Alexander quandles. We show that
if the reduced Alexander polynomial $\Delta_{L}(t)$ is vanishing, then
$L$ admits a non-trivial coloring by any non-trivial Alexander quandle $Q$, and that
if $\Delta_{L}(t)=1$, then $L$ admits only the trivial coloring by any Alexander quandle $Q$, also show that
if $\Delta_{L}(t)\not=0, 1$, then
$L$ admits a non-trivial coloring by the Alexander quandle $\Lambda/(\Delta_{L}(t))$.
\end{abstract}
\maketitle

\section{Introduction and preliminaries}

In 1982, D. Joyce\cite{Joyce} and S. Matveev\cite{Matveev} defined the notion of quandle.
A {\it quandle} is a non-empty set $X$ equipped with a binary operation
$\ast$ satisfying the following three axioms:
\begin{itemize}
  \item[(Q1)] For any $x \in X$, $x\ast x = x$.
  \item[(Q2)] For any $x, y\in  X$, there is a unique element $z\in X$ such that $x=z\ast y$.
  \item[(Q3)] For any $x, y, z\in  X$, $(x\ast y)\ast z = (x\ast z)\ast (y\ast z)$.
\end{itemize}

The property (Q2) is equivalent to the following property that
\begin{itemize}
  \item[(Q2$'$)] There is a binary operation $\bar\ast:X\times X\to X$ such that for any $x, y\in  X$,
$(x\ast y)\bar\ast y = (x\bar\ast y)\ast y=x$.
\end{itemize}
A quandle homomorphism is a map between two quandles preserving the quandle operation.

There are many quandles, see~\cite{Brieskorn}\cite{CarterJKS}\cite{CarterKS}.
For example, let $X$ be a subset of a group closed under conjugations. Then $X$ is
a quandle, called a {\it conjugation quandle}, under the operation $x\ast y = y^{-1}xy$ for all $x, y\in  X$.

An important class of quandles are Alexander quandles.
Let $\Lambda=\Zeal[t,t^{-1}]$ be the Laurent polynomial ring over the integers. Then any $\Lambda$-module $M$ has a quandle structure, called an {\it Alexander quandle}, under the operation $a\ast b=ta+(1-t)b$ for $a,b\in M$.
An Alexander quandle is said to be {\it finitely generated} if its underlying abelian group is finitely generated.

A {\it coloring} on an oriented classical knot diagram $D$ by a quandle $Q$ is a function $C :R\to Q$, where $R$ is the set of over-arcs in the diagram, satisfying the condition
depicted in the Fig.~\ref{coloring}. In the figure, a crossing with over-arc, $r$, has color $C(r) = y \in Q$. The
under-arcs are called $r_1$ and $r_2$ from top to bottom; they are colored $C(r_1) = x$ and $C(r_2) = x\ast y$.
Note that
locally the colors do not depend on the orientation of the under-arc, and that any constant function $C_q :R\to Q$ at $q\in Q$ is a coloring by the Axiom (Q1), called the {\it trivial coloring} at $q\in Q$.
\begin{figure}[ht]\label{coloring}
  \includegraphics[width=2.5cm]{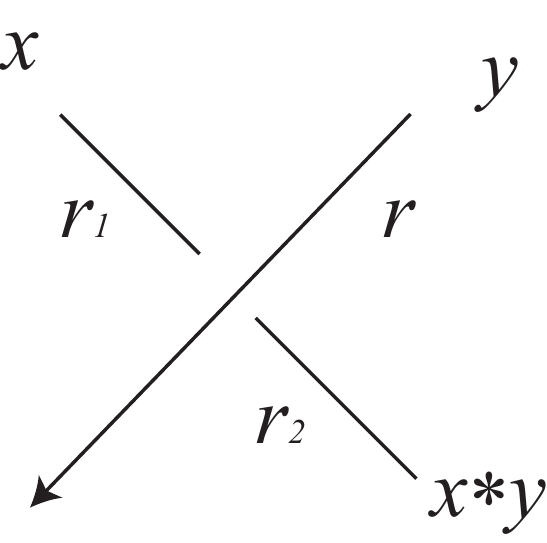}
\end{figure}

In 2001, A. Inoue\cite{Inoue}  proved the following proposition.
\begin{prop}\label{Inoue} Let $p$ be a prime number, $J$ an ideal of the ring $\Lambda_p$ and $Q(K)$ a knot quandle. For each $i\geq 0$, we put $e_i(t)=\Delta_K^{(i)}(t)/\Delta_K^{(i+1)}(t)$, where $\Delta_K^{(i)}(t)$ is the $i$-th Alexander polynomial of $K$ defined by the greatest common divisor polynomial of all $(n-i+1)$-th minor determinants of the Alexander matrix $A_D$ obtained from the Wirtinger presentation. Then the number of all quandle homomorphisms of the knot quandle $Q(K)$ to the Alexander quandle $\Lambda_p/J$ is equal to the cardinality of the module $\Lambda_p/J\oplus\oplus_{i=0}^{n-2}\{\Zeal_p[t,t^{-1}]/(e_i(t),J)\}.$
\end{prop}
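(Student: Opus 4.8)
The plan is to pass from quandle homomorphisms to colorings, and then to turn the counting into a computation of a $\mathrm{Hom}$-module over a principal ideal domain. First I would record that, for the Alexander quandle $M = \Lambda_p/J$, a coloring of a diagram $D$ of $K$ (with arcs $\alpha_1,\dots,\alpha_n$) is precisely an assignment $\alpha_i \mapsto m_i \in M$ such that at each crossing the rule of Fig.~\ref{coloring} reads $m_k - t\,m_i - (1-t)\,m_j = 0$. Since $Q(K)$ is presented by the Wirtinger generators (the arcs) and relators (the crossings) of $D$, a quandle homomorphism $Q(K)\to M$ is exactly such a coloring. Hence it suffices to count the solutions $\mathbf m=(m_1,\dots,m_n)\in M^n$ of the homogeneous linear system $A_D\,\mathbf m = 0$, where $A_D$ is the $n\times n$ Alexander matrix coming from the Wirtinger presentation.

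The second step turns this into a module statement. The solution set $\mathrm{Col}_M(D)=\ker(A_D\colon M^n\to M^n)$ is a $\Lambda_p$-module, and by the standard $\mathrm{Hom}$-dual description of a linear system it is naturally isomorphic to $\mathrm{Hom}_{\Lambda_p}(N,M)$, where $N=\mathrm{coker}(A_D^{\mathsf T}\colon \Lambda_p^n\to\Lambda_p^n)$ is the module presented by the transpose of $A_D$. Two features of $A_D$ drive everything. Each row sums to $1-t-(1-t)=0$, so the constant vector lies in the kernel; these are the trivial colorings and they account for a free rank-one direction in $N$. Moreover the rows of $A_D$ satisfy a single relation, reflecting the one redundant Wirtinger relator, so $A_D$ has rank $n-1$ over the fraction field and $N$ has free rank exactly $n-(n-1)=1$.

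The heart of the argument is the third step. Here I use that $\Lambda_p=\Zeal_p[t,t^{-1}]$ is a principal ideal domain: this is exactly where primality of $p$ enters, since $\Zeal_p$ is then a field, $\Zeal_p[t]$ is Euclidean, and $\Lambda_p$ is a localization of it. By the structure theorem over this PID, $N\cong \Lambda_p \oplus \bigoplus_{i=0}^{n-2}\Lambda_p/(e_i(t))$, and I would identify the torsion invariant factors with the ratios $e_i(t)=\Delta_K^{(i)}(t)/\Delta_K^{(i+1)}(t)$ by comparing the Smith normal form of $A_D^{\mathsf T}$ with the definition of $\Delta_K^{(i)}$ as the gcd of the $(n-i+1)$-minors. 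Applying $\mathrm{Hom}_{\Lambda_p}(-,M)$ and using $\mathrm{Hom}_{\Lambda_p}(\Lambda_p,\Lambda_p/J)\cong\Lambda_p/J$ together with $\mathrm{Hom}_{\Lambda_p}(\Lambda_p/(d),\Lambda_p/J)\cong\Lambda_p/(d,J)$ (a direct computation of the annihilator, valid in any PID), I obtain
$$\mathrm{Col}_M(D)\ \cong\ \Lambda_p/J\ \oplus\ \bigoplus_{i=0}^{n-2}\Lambda_p/(e_i(t),J),$$
and taking cardinalities gives the asserted count.

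The step I expect to be the main obstacle is the bookkeeping in the third step: confirming that the free rank of $N$ is exactly one and that the non-unit torsion invariant factors are precisely the $e_i(t)$ in the stated range $0\le i\le n-2$. This means matching the indexing of the minors of $A_D$ (with the redundancy and the column responsible for the constant colorings) against the elementary-ideal definition of $\Delta_K^{(i)}$, and observing that since the gcds and the Smith form are all computed in the single PID $\Lambda_p$, the invariant factor attached to $(e_i)$ agrees with the ratio $\Delta_K^{(i)}/\Delta_K^{(i+1)}$ of gcd's of minors. Once this identification of invariant factors is secured, the remaining algebra is routine.
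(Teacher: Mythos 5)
A preliminary caveat: the paper does not prove this proposition at all --- it is quoted verbatim as Inoue's result, with a citation to his 2001 paper --- so the only meaningful comparison is with Inoue's published argument. Your proposal is correct and in substance reconstructs that argument: homomorphisms $Q(K)\to\Lambda_p/J$ are colorings, colorings are the kernel of the Wirtinger Alexander matrix acting on $(\Lambda_p/J)^n$, this kernel is $\mathrm{Hom}_{\Lambda_p}(N,\Lambda_p/J)$ for the presented module $N$, and the count follows from the structure theorem over the PID $\Lambda_p$ together with $\mathrm{Hom}_{\Lambda_p}(\Lambda_p/(d),\Lambda_p/J)\cong\Lambda_p/(d,J)$. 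The one cosmetic difference: Inoue splits off the trivial summand $\Lambda_p/J$ at the outset, by fixing the color of a base arc and passing to the reduced $(n-1)\times(n-1)$ Alexander matrix presenting the $\bmod\,p$ Alexander module, whereas you keep the full $n\times n$ system and let the free rank-one summand of $\mathrm{coker}(A_D^{\mathsf T})$ account for the trivial colorings; these are equivalent. Two small points you should pin down in the ``bookkeeping'' you flag. First, your claim that the free rank of $N$ is exactly one: the redundant Wirtinger relator only gives $\mathrm{rank}(A_D)\leq n-1$, and the reverse inequality needs $\Delta_K\neq 0$ in $\Lambda_p$, which holds because $\Delta_K(1)=\pm 1$ is not divisible by $p$ --- this is precisely where the hypothesis that $K$ is a \emph{knot} enters, and it is worth noting that it fails for links with vanishing reduced Alexander polynomial, which is exactly the phenomenon the surrounding paper is devoted to. Second, your silent replacement of $\mathrm{coker}(A_D^{\mathsf T})$ by the Smith normal form of $A_D$ is legitimate since the $k\times k$ minors of a matrix and its transpose coincide, so $e_i(t)=\Delta_K^{(i)}/\Delta_K^{(i+1)}$ is indeed the invariant factor $d_{n-1-i}$; in matching indices, note that the statement's ``$(n-i+1)$-th minor determinants'' is evidently a misprint for $(n-i-1)$, and you implicitly (and correctly) used the standard convention.
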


As a corollary, he showed that for an Alexander quandle $\Lambda_p/J$ and for a {\it knot} diagram $K$, there are only trivial colorings of a diagram of $L$  by the quandle $\Lambda_p/J$ if and only if the ideal generated by $J$ and $\Delta_K(t)$ is equal to $\Lambda_p$, where $p$ is a prime number.
It is known that, for a finite quandle $X$,  there is a one-to-one
correspondence between quandle homomorphisms $Q(K)\to X$ and colorings $C : R\to X$, see \cite{CarterJKLS}.

In this paper, we will study the colorability of link diagrams by Alexander quandles from the view point of  the reduced Alexander polynomial of the given link. The reduced Alexander polynomial $\Delta_{L}(t)$ of a link $L$ is obtained from the multivariable Alexander polynomial $\Delta_{L}(t_1,\cdots,t_\mu)$ by setting $t_1=\cdots=t_\mu=t,$ where $\mu$ is the number of components of $L$.
The following is the main results.

\begin{thm} Let $\Delta_{L}(t)$ be the reduced Alexander polynomial of a link $L$.
\begin{itemize}
  \item[(1)] If $\Delta_{L}(t)=0$, then
$L$ admits a non-trivial coloring by any non-trivial Alexander quandle $Q$.
  \item[(2)] If $\Delta_{L}(t)=1$, then
$L$ admits only the trivial coloring by any Alexander quandle $Q$.
  \item[(3)] If $\Delta_{L}(t)\not=0, 1$, then
$L$ admits a non-trivial coloring by the Alexander quandle $\Lambda/(\Delta_{L}(t))$.
\end{itemize}
\end{thm}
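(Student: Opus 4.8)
The plan is to recast colorings as linear algebra over $\Lambda=\Zeal[t,t^{-1}]$ and to read off each of the three cases from the determinant of a reduced coloring matrix. Fix a diagram $D$ of $L$ with $n$ crossings, so with $n$ over-arcs $a_1,\dots,a_n$. A coloring by an Alexander quandle $M$ assigns colors $c_i=C(a_i)\in M$, and the rule of Fig.~\ref{coloring} reads, at each crossing, $c_{\mathrm{out}}=t\,c_{\mathrm{in}}+(1-t)\,c_{\mathrm{over}}$, which is $\Lambda$-linear in the colors. Assembling the $n$ crossing relations yields an $n\times n$ matrix $B$ over $\Lambda$, and the colorings by $M$ are exactly $\ker\bigl(B:M^{n}\to M^{n}\bigr)$. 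Since $1-t-(1-t)=0$, every row of $B$ sums to $0$, so the constant vectors $(m,\dots,m)$ always lie in the kernel; these are precisely the trivial colorings. Hence $L$ admits a non-trivial coloring by $M$ if and only if $\ker(B|_M)$ is strictly larger than the diagonal.

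Next I would shrink the system. Translating a coloring by a constant lets me assume $c_1=0$, which deletes the first column of $B$; the single Wirtinger dependence among the relators (whose coefficients are units of $\Lambda$) then lets me discard one redundant row. What remains is an $(n-1)\times(n-1)$ matrix $\bar B$ over $\Lambda$ for which the colorings with $c_1=0$ are exactly $\ker(\bar B|_M)\subseteq M^{n-1}$, so a non-trivial coloring exists iff $\ker(\bar B|_M)\neq 0$. The key lemma, and the step I expect to be the \textbf{main obstacle}, is the identification
\[
\det\bar B\;\doteq\;\Delta_L(t)
\]
up to a unit $\pm t^{k}$ of $\Lambda$: the determinant of the reduced coloring matrix is the reduced Alexander polynomial. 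For this I would run the standard Fox-calculus computation on the Wirtinger presentation and match the order ideal of the module presented by $\bar B$ with the polynomial $\Delta_L(t)$ obtained from $\Delta_L(t_1,\dots,t_\mu)$ by setting $t_1=\dots=t_\mu=t$; getting the normalizations to agree (in particular the extra factors that appear when $\mu\geq 2$) is the delicate point.

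Granting the lemma, the three cases follow from elementary module theory over the unique factorization domain $\Lambda$. If $\Delta_L(t)=1$, then $\det\bar B$ is a unit, so $\bar B$ is invertible over $\Lambda$ and hence $\bar B:M^{n-1}\to M^{n-1}$ is injective for every $M$; thus $\ker(\bar B|_M)=0$ and only trivial colorings occur, proving $(2)$. If $\Delta_L(t)=0$, then $\det\bar B=0$, so $\bar B$ is singular over $\operatorname{Frac}(\Lambda)$; clearing denominators and dividing out the gcd of the coordinates produces a primitive $w\in\Lambda^{n-1}$ with $\bar B w=0$ and $\gcd(w_i)$ a unit, whence $\sum_i\lambda_i w_i=1$ for some $\lambda_i\in\Lambda$. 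Given any non-trivial $Q$, choose $0\neq m\in M$ and set $w\cdot m:=(w_i m)_i\in M^{n-1}$; then $\bar B(w\cdot m)=0$ and $w\cdot m\neq 0$ because $\sum_i\lambda_i(w_i m)=m\neq 0$, so this is the required non-trivial coloring, proving $(1)$.

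Finally, for $(3)$ put $\delta:=\Delta_L(t)$ and $R:=\Lambda/(\Delta_L(t))$, which is a non-zero ring because $\delta$ is a non-unit. From $\bar B\,\operatorname{adj}(\bar B)=\det(\bar B)\,I\doteq\delta\,I$, every column of $\operatorname{adj}(\bar B)$ reduces modulo $\delta$ to an element of $\ker(\bar B|_R)$. Moreover $\operatorname{adj}(\bar B)\not\equiv 0\pmod{\delta}$: otherwise $\operatorname{adj}(\bar B)=\delta\,C$ for some $C$ over $\Lambda$, and cancelling $\delta$ in $\bar B\,(\delta C)\doteq\delta I$ (legitimate since $\Lambda$ is a domain) would give $\bar B$ a right inverse over $\Lambda$, forcing $\det\bar B$, hence $\delta$, to be a unit, contrary to $\Delta_L(t)\neq 1$. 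Therefore some column of $\operatorname{adj}(\bar B)$ is a non-zero element of $\ker(\bar B|_R)$, that is, a non-trivial coloring by $\Lambda/(\Delta_L(t))$, proving $(3)$. The only genuinely hard input is the determinant identification of the second paragraph; once it is in place, the case analysis above is routine.
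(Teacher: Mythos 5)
Your reduction to the matrix $\bar B$ and the three-way case split are structurally sound, and your adjugate argument for (3) is genuinely different from (and slicker than) the paper's echelon-form computation; but the key lemma on which everything rests is false for links of $\mu\geq 2$ components, precisely at the normalization you yourself flagged as the delicate point. For $\mu\geq 2$ the determinant of the reduced Wirtinger coloring matrix is $\det\bar B\doteq(t-1)\,\Delta_L(t)$, not $\Delta_L(t)$; this extra factor is the Wirtinger-side analogue of the factor $1+t+\cdots+t^{n-1}$ in the Burau identity $\det(\widetilde\phi(w)-id)=(1+t+\cdots+t^{n-1})\Delta_L(t)$ that the paper works with. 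Concretely, for the Hopf link both crossing relations read $(1-t)(c_1-c_2)=0$, so $\bar B=(t-1)$, while $\Delta_L(t)=\Delta_L(t,t)=1$. This breaks your proof of (2) as written: $\bar B$ is never invertible over $\Lambda$ when $\mu\geq 2$, and indeed the Hopf link admits non-constant colorings by the quandle $\Lambda/(t-1)$ (every pair $(c_1,c_2)$ satisfies the relations), exactly where the dropped factor acts non-injectively. (The paper itself only argues (2) for $Q=\Lambda/(\Delta_L(t))$, which is the zero module, so full generality is not established there either; but your argument is the one under review, and it hinges on the false identity.) The same factor leaks into your (3): the columns of $\mathrm{adj}(\bar B)$ still reduce into $\ker\bigl(\bar B|_R\bigr)$ because $\Delta_L(t)$ divides $\det\bar B$, but if $\mathrm{adj}(\bar B)=\Delta_L(t)\,C$ you now obtain only $\bar B C\doteq(t-1)I$, hence $\Delta_L(t)\mid(t-1)^{n-2}$ up to units, so the residual case $\Delta_L(t)\doteq(t-1)^k$ is left open; the paper's route via Proposition~\ref{reducing} and Proposition~\ref{rank-solution}(3)(ii), which locates a diagonal entry of the echelon form sharing a factor with $\Delta_L(t)$, does not have this blind spot.

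Case (1) has an independent gap: $\Lambda=\Zeal[t,t^{-1}]$ is a UFD but not a B\'ezout domain, so a unit gcd of the coordinates of $w$ does \emph{not} yield $\sum_i\lambda_iw_i=1$. For instance $2$ and $1+t$ are coprime, yet $(2,1+t)$ is a proper ideal with $\Lambda/(2,1+t)\cong\mathbb{F}_2\neq 0$. Consequently a primitive kernel vector $w$ fixed in advance can satisfy $w_im=0$ for every $i$ and every $m$ in some non-trivial Alexander quandle (take $M=\Lambda/(w_1,\dots,w_{n-1})$ whenever that quotient is non-zero), and your non-triviality certificate $\sum_i\lambda_i(w_im)=m$ evaporates. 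Note also that you used only $\det\bar B=0$, i.e.\ one extra kernel dimension; the paper's Lemma~\ref{trivialcoloring} instead uses the full strength of $\Delta_L(t)=0$, namely $\operatorname{rank}(\phi(w)-id)\leq n-2$, to get a two-parameter solution space containing the constants, and then chooses the scalar $x(t)$ \emph{after} the quandle $Q$ and the element $q$ are given. Any repair of your argument must similarly let the kernel vector depend on $Q$: by the construction above, no single primitive $w$ can serve all non-trivial quandles at once.
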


\section{Linear Algebra with coefficient ring $\Lambda$ and $\Lambda/(f(t))$}

In this section we will study the properties of matrices whose entries are in $\Lambda$ or $\Lambda/(f(t))$, where $f(t)$ is a fixed non-zero polynomial in $\Lambda$.

Consider a system of linear equations whose coefficients are in $\Lambda$ or in $\Lambda/(f(t))$.

{\tiny
\begin{equation}\tag{*}\label{*}
  \left(
    \begin{array}{ccc}
      a_{11}(t) & \cdots & a_{1n}(t) \\
      \vdots & \ddots & \vdots \\
      a_{m1}(t) & \cdots & a_{mn}(t) \\
    \end{array}
  \right)\left(
           \begin{array}{c}
             x_1 \\
             \vdots \\
             x_n \\
           \end{array}
         \right)
=\left(
           \begin{array}{c}
             0 \\
             \vdots \\
             0 \\
           \end{array}
         \right)
\end{equation}}

For a non-zero polynomial $\alpha(t)$ in $\Lambda$, consider the following system $(**)$ of linear equations which is obtained by  multiplying $\alpha(t)$ to the $j$-th row.
{\tiny
\begin{equation}\tag{**}\label{**}
  \left(
    \begin{array}{ccc}
      a_{11}(t) & \cdots & a_{1n}(t) \\
      \vdots & \ddots & \vdots \\
      \alpha(t)a_{j1}(t) & \cdots & \alpha(t)a_{jn}(t) \\
      \vdots & \ddots & \vdots \\
      a_{m1}(t) & \cdots & a_{mn}(t) \\
    \end{array}
  \right)\left(
           \begin{array}{c}
             x_1 \\
             \vdots \\
             x_n \\
           \end{array}
         \right)
=\left(
           \begin{array}{c}
             0 \\
             \vdots \\
             0 \\
           \end{array}
         \right)
\end{equation}}

If we solve the systems with the coefficient ring $\Lambda$, clearly the equations $(*)$ and $(**)$ have the same solution. But if we are working on $\Lambda/(f(t))$, we need to be careful. If $\alpha(t)$ is an unit element $t^n, n\in\Zeal$, then $(\ast)$ and $(\ast\ast)$ have the same solutions. But, if $\alpha(t)$ is not a unit element, the solutions to $(**)$ may not be solutions to $(*)$.
The following lemma gives a characterization for which the equations $(\ast)$ and $(\ast\ast)$ have the same solution.

\begin{prop}\label{solution} $(\ast)$ and $(\ast\ast)$ have the same solution as equations in $\Lambda/(f(t))$ if $\alpha(t)$ and $f(t)$ are relatively prime.
\end{prop}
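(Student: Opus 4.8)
The plan is to prove the two inclusions between the solution sets separately, with only the nontrivial one using the coprimality hypothesis. First I would observe that every solution of $(\ast)$ is automatically a solution of $(\ast\ast)$: the two systems differ only in the $j$-th equation, and if $\sum_{k} a_{jk}(t)x_k = 0$ holds in $\Lambda/(f(t))$, then so does $\alpha(t)\sum_{k} a_{jk}(t)x_k = 0$. This inclusion requires no assumption on $\alpha(t)$ whatsoever, so the entire content of the proposition lies in the reverse inclusion.

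For the converse, let $(x_1,\dots,x_n)$ be a solution of $(\ast\ast)$ in $\Lambda/(f(t))$. Since all rows other than the $j$-th coincide in the two systems, the corresponding equations of $(\ast)$ already hold, and it remains only to verify the $j$-th. Set $\beta(t) = \sum_{k} a_{jk}(t)x_k \in \Lambda/(f(t))$; the $j$-th equation of $(\ast\ast)$ asserts precisely that $\alpha(t)\beta(t) = 0$ in $\Lambda/(f(t))$. Lifting to $\Lambda$, this says that $f(t)$ divides $\alpha(t)\beta(t)$, and the goal is to upgrade this to $f(t)\mid\beta(t)$, which is exactly the $j$-th equation of $(\ast)$.

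The key algebraic step --- and the point where the hypothesis enters --- is the passage from $f\mid\alpha\beta$ to $f\mid\beta$. Here I would emphasize that $\Lambda = \Zeal[t,t^{-1}]$ is \emph{not} a principal ideal domain, so one cannot invoke a B\'ezout identity $\alpha u + f v = 1$; indeed such an identity typically fails, since coprime elements of $\Lambda$ need not generate the unit ideal (for example $2$ and $t+1$). The correct tool is that $\Lambda$, being the localization of the polynomial ring $\Zeal[t]$ at the powers of $t$, is a unique factorization domain, and in any such ring the Euclid-type cancellation law holds: if $\gcd(f,\alpha)=1$ and $f\mid\alpha\beta$, then $f\mid\beta$. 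Applying this with the hypothesis that $\alpha(t)$ and $f(t)$ are relatively prime yields $f(t)\mid\beta(t)$, hence $\beta(t) = 0$ in $\Lambda/(f(t))$, which completes the verification that the solution of $(\ast\ast)$ also solves $(\ast)$. I expect the main obstacle to be exactly this recognition that the naive B\'ezout argument is unavailable over $\Lambda$ and must be replaced by the UFD cancellation property; once that is in place, the remaining bookkeeping is routine.
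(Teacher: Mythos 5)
Your proof is correct and takes essentially the same route as the paper's: after the easy inclusion, both arguments lift the $j$-th equation to $\Lambda$ as $\alpha(t)\beta(t)=f(t)k(t)$ and cancel using coprimality --- the paper applies Euclid's lemma on the other side, deducing $\alpha(t)\mid k(t)$ and then dividing by $\alpha(t)$, while you deduce $f(t)\mid\beta(t)$ directly, which is the same cancellation. Your explicit observation that this step is licensed by $\Lambda$ being a UFD (and not by a B\'ezout identity, which indeed fails, e.g.\ for $2$ and $t+1$) is a justification the paper leaves implicit.
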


\begin{proof}
Assume that $\alpha(t)$ and $f(t)$ are relatively prime.
Clearly any solution of $(\ast)$ is a solution of $(\ast\ast).$ Conversely, if $(x_1,\cdots,x_n)^T$ is a solution of $(\ast\ast)$, then,   since $\alpha(t)a_{j1}(t)x_1+\cdots+\alpha(t)a_{jn}(t)x_n=0$ in $\Lambda/(f(t)),$ there is $k(t)\in \Lambda$ such that, in $\Lambda$,
$$\alpha(t)(a_{j1}(t)x_1+\cdots+a_{jn}(t)x_n)=f(t)k(t).$$
Since $\alpha(t)$ and $f(t)$ are relatively prime, $k(t)$ is a multiple of $\alpha(t)$, i.e., $k(t)=k'(t)\alpha(t)$ for some $k'(t)\in \Lambda.$
Hence, in $\Lambda$, $$\alpha(t)(a_{j1}(t)x_1+\cdots+a_{jn}(t)x_n)=f(t)k'(t)\alpha(t),$$ which is equivalent with $$a_{j1}(t)x_1+\cdots+a_{jn}(t)x_n=f(t)k'(t)$$ in $\Lambda$. Thus $(x_1,\cdots,x_n)^T$ is a solution of $(\ast).$
\end{proof}

We remind the reader about matrix theory with coefficients in a field.
It is well-known that the solution of the system $(*)$ of linear equations does not changed by the following elementary row operations, and that every matrix can be changed to a row-echelon form by using elementary row operations.
\begin{itemize}
  \item[$\mathcal{R}_1$] Row switching(a row within the matrix is switched with another row),
  \item[$\mathcal{R}_2$] Row multiplication(each element in a row is multiplied by a non-zero constant) and
  \item[$\mathcal{R}_3$] Row addition(a row is replaced by the sum of that row and a multiple of another row).
\end{itemize}

For the matrices with coefficients in $\Lambda$ or $\Lambda/(f(t))$, we modify the operation $\mathcal{R}_2$ as
\begin{itemize}
  \item[$\mathcal{R}'_2$] Row multiplication for$\Lambda$(each element in a row is multiplied by a non-zero polynomial $\alpha(t)$)
  \item[$\mathcal{R}'_2$] Row multiplication for$\Lambda/(f(t))$(each element in a row is multiplied by a non-zero polynomial $\alpha(t)$ with $(\alpha(t),f(t))=1$)
\end{itemize}

By Lemma~\ref{solution}, one can see that elementary row operations $\mathcal{R}_1, \mathcal{R}'_2$ and $\mathcal{R}_3$  do not change the solution of the system $(*)$, and that every matrix with coefficients in $\Lambda$ or $\Lambda/(f(t))$ can be changed by a row-echelon form by using elementary row operations $\mathcal{R}_1, \mathcal{R}'_2$ and $\mathcal{R}_3$.
We will use the terminology ``rank'' for the matrices with coefficients in $\Lambda$ or $\Lambda/(f(t))$, too.

Consider the system $(*')$ of linear equations whose coefficient matrix $A$ is in a row-echelon form.
{\tiny
\begin{equation}\tag{$*'$}\label{*'} A\text{\bf x}=
  \left(
    \begin{array}{ccccccc}
      a_1(t)    & \ast      & \ast  &\cdots & \ast  &\cdots  &\ast\\
      0         &  0        & a_2(t)&\cdots & \ast  &\cdots  &\ast\\
      \vdots    & \vdots    & \vdots&\ddots &\vdots &\vdots  &\vdots\\
      0         & 0         &\cdots & 0     & a_k(t)&\cdots  &\ast\\
      0         & 0         &\cdots & 0     & 0     &\cdots  &0\\
      0         & 0         &\cdots & 0     & 0     &\cdots  &0\\
    \end{array}
  \right)\left(
           \begin{array}{c}
             x_1 \\
             \vdots \\
             x_n \\
           \end{array}
         \right)
=\left(
           \begin{array}{c}
             0 \\
             \vdots \\
             0 \\
           \end{array}
         \right)
\end{equation}}

\begin{prop}\label{rank-solution}
\begin{itemize}
  \item[(1)] If $\text{rank}(A)<n$, $(*')$ has infinitely many solutions.
  \item[(2)] If $\text{rank}(A)=n$ and if the coefficient ring is $\Lambda$, $(*')$ has the only trivial solution.
  \item[(3)] If $\text{rank}(A)=n$ and if the coefficient ring is $\Lambda/(f(t))$, then
\begin{itemize}
  \item[(i)] if $a_i(t)$ and $f(t)$ are relatively prime for all $i$, then $(*')$ has the only trivial solution;
  \item[(ii)] if there exist $i$ such that $a_i(t)$ and $f(t)$ are not relatively prime, then
$(*')$ has infinitely solutions.
\end{itemize}
\end{itemize}
\end{prop}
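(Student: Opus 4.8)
The plan is to exploit the hypothesis $\mathrm{rank}(A)=n$ directly: since the $n$ pivots must occupy the $n$ columns $1,\dots,n$, the system $(*')$ is equivalent to the square upper-triangular system with diagonal entries $a_1(t),\dots,a_n(t)$, the rows below the $n$-th being identically zero. Its determinant is the product $a_1(t)\cdots a_n(t)$, and the whole proposition amounts to deciding when this triangular system has a nonzero kernel. The organizing principle I would use is that $\Lambda=\Zeal[t,t^{-1}]$ is a UFD, so ``relatively prime'' means ``no common irreducible factor'' and Euclid's lemma (already the engine of Proposition~\ref{solution}) is available.

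For the two ``only the trivial solution'' cases, (2) and (3)(i), I would argue by back-substitution from the bottom. The last row reads $a_n(t)x_n=0$. In case (2) the coefficient ring $\Lambda$ is a domain and $a_n(t)\neq 0$, so $x_n=0$; in case (3)(i), $a_n(t)x_n\equiv 0\pmod{f}$ means $f\mid a_n x_n$ in $\Lambda$, and since $\gcd(a_n,f)=1$ Euclid's lemma forces $f\mid x_n$, i.e. $x_n=0$ in $\Lambda/(f)$. Once $x_n=0$, the $(n-1)$-st row collapses to $a_{n-1}(t)x_{n-1}=0$ and the same step gives $x_{n-1}=0$; iterating up the triangle kills every variable. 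This is the clean part of the argument.

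The ``infinitely many solutions'' cases, (1) and (3)(ii), I would split into (a) producing a single nonzero solution and (b) upgrading to infinitely many. For (b) I would note that if $\mathbf v\neq 0$ is a solution with nonzero component $v_r$, then every $\Lambda/(f)$-multiple $g\,\mathbf v$ is again a solution, and the number of distinct ones is at least $\lvert(v_r)\rvert=\lvert\Lambda/(f/\gcd(f,v_r))\rvert$, which is infinite because $f/\gcd(f,v_r)$ is a non-unit (as $f\nmid v_r$) and $\Lambda/(g)$ is infinite for every non-unit $g$; over $\Lambda$ the same multiplication trick works since $\Lambda$ is an infinite domain. For (a) in case (1): over $\Lambda$ I would pass to the fraction field $\mathbb{Q}(t)$, where $\mathrm{rank}(A)=k<n$ gives a nonzero kernel vector, and clear denominators to land in $\Lambda^n$; over $\Lambda/(f)$ I would observe that with fewer than $n$ nonzero rows every $n\times n$ minor of $A$ vanishes, so a nonzero solution is guaranteed by McCoy's zero-divisor criterion. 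For (a) in case (3)(ii), the determinant $a_1\cdots a_n$ shares an irreducible factor with $f$, hence $\overline{a_1\cdots a_n}$ is a zero-divisor in $\Lambda/(f)$, and again McCoy's theorem yields a nonzero solution (the same criterion run in reverse reproves (3)(i)).

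I expect the genuine obstacle to be step (a) of case (3)(ii): producing a nonzero solution over $\Lambda/(f)$, which is not a domain. Naive back-substitution from the bottom can break down, because after reaching a pivot $a_j$ that shares a factor with $f$ one must still solve the rows above it, whose pivots may be coprime to $f$ yet non-invertible in $\Lambda/(f)$ (e.g. $2$ modulo $t-1$), so the required divisibility of the right-hand side can fail for a fixed choice of the lower free parameters. The fix is to treat the annihilator of the bad pivot as a reservoir of free parameters and to choose them so that each higher row becomes solvable --- which is exactly the content of the hard direction of McCoy's theorem specialized to a triangular matrix. I would therefore either cite McCoy's theorem (nontrivial kernel $\iff$ determinant is a zero-divisor) or carry out this triangular induction explicitly; everything else reduces to the elementary back-substitution and the Euclid's-lemma bookkeeping above.
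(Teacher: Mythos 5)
Your proposal is correct, and it is substantially more complete than the paper's own proof, so the comparison is worth spelling out. The paper dismisses (1), (2) and (3)(i) as trivial and, for (3)(ii), exhibits only the one-variable example $(t-1)x=0$ over $\Lambda/((t-1)(t^2+1))$, with solutions $x=(t^2+1)g(t)$, asserting that ``modifying the idea in this example'' yields the general case. You supply the mechanisms the paper leaves implicit: back-substitution driven by Euclid's lemma in the UFD $\Lambda$ for the uniqueness cases (2) and (3)(i); passage to $\mathbb{Q}(t)$ and clearing denominators for existence in (1) over $\Lambda$; McCoy's zero-divisor criterion for existence over $\Lambda/(f)$; and, to pass from one non-trivial solution to infinitely many --- a step the paper never argues at all --- the identification $(\bar v_r)\cong\Lambda/(f/\gcd(f,v_r))$ together with the fact that $\Lambda/(g)$ is infinite for every nonzero non-unit $g$ (a quotient by an irreducible factor of $g$ is a domain that is not a field, and finite domains are fields). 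Most valuably, you pinpoint the genuine gap in the paper's ``modify the example'' sketch: a pivot coprime to $f$ need not be invertible in $\Lambda/(f)$, so back-substitution above the bad pivot can fail for the obvious choice of free parameter. Concretely, for the echelon system $2x_1+x_2=0$, $(t-1)x_2=0$ over $\Lambda/((t-1)(t+1))$, the naive generalization of the paper's example suggests $x_2=t+1$, but then $2x_1\equiv -(t+1)$ has no solution (reducing mod $2$ would force $t+1$ to be a unit of $\mathbb{F}_2[t,t^{-1}]$), whereas the rescaled choice $x_2=2(t+1)$ does work; McCoy's theorem, or the explicit triangular induction over annihilators you describe, is exactly what repairs this uniformly, and its easy direction re-derives (3)(i), since $a_1(t)\cdots a_n(t)$ coprime to $f$ is a non-zero-divisor mod $f$. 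One caveat to record: both you and the paper tacitly assume $f$ is not a unit of $\Lambda$ (otherwise $\Lambda/(f)$ is the zero ring and the ``infinitely many'' conclusion in case (1) fails); in (3)(ii) the non-coprimality hypothesis enforces this automatically, and in the paper's application $f=\Delta_L(t)$ is assumed non-trivial, so nothing is lost.
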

\begin{proof} The result is trivial except the case (3)(ii). For the case (3)(ii), consider a linear equation $(t-1)x=0$
in the coefficient ring $\Lambda/((t-1)(t^2+1))$. Even though , the equation $\text{rank}(A)$ is 1 which is the number of indeterminants, it has infinitely many solutions: $x=(t^2+1)g(t)$ for any  $g(t)\in\Lambda$. By modifying the idea in this example, one can find infinitely solutions for $(*')$ in the case (3)(ii).
\end{proof}

\begin{prop}\label{reducing} For $A=(a_{ij})$ an $n\times n$-matrix with coefficients in $\Lambda$, if $a_{11}\not=0$, then define $$b_{ij}(t)=\det\left(\begin{array}{cc}
                      a_{11}(t) & a_{1j}(t) \\
                      a_{j1}(t) & a_{ij}(t) \\
                      \end{array}
                       \right)$$ for $i,j$ with $2\leq i\leq m$ and $2\leq j\leq n$. Let $B$ be the $(n-1)\times (n-1)$-matrix defined by $B=(b_{ij})$.
Then $$\det(B)=a_{11}^{n-2}\det(A).$$
\end{prop}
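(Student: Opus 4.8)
The plan is to prove this as an instance of Chio's pivotal condensation, carrying out the argument entirely by elementary row operations valid over the commutative ring $\Lambda$. First I would expand the $2\times 2$ determinants to write $b_{ij}(t)=a_{11}(t)a_{ij}(t)-a_{1j}(t)a_{i1}(t)$ for $2\le i,j\le n$, so that $B$ records exactly the entries one obtains after a single pivot step on $a_{11}(t)$.

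The key computation proceeds in two steps. First I would multiply each of the rows $2,\dots,n$ of $A$ by $a_{11}(t)$; this is the operation $\mathcal{R}'_2$ applied $n-1$ times, and it scales the determinant by $a_{11}(t)^{n-1}$, giving a matrix $A'$ with $\det(A')=a_{11}(t)^{n-1}\det(A)$ whose $(i,j)$-entry is $a_{11}(t)a_{ij}(t)$ for $i\ge 2$. Next, for each $i\ge 2$ I would subtract $a_{i1}(t)$ times the (unchanged) first row from the $i$-th row; this is the operation $\mathcal{R}_3$, which leaves the determinant unchanged. The new $(i,j)$-entry becomes $a_{11}(t)a_{ij}(t)-a_{i1}(t)a_{1j}(t)=b_{ij}(t)$, and in particular the new $(i,1)$-entry is $a_{11}(t)a_{i1}(t)-a_{i1}(t)a_{11}(t)=0$. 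Hence the resulting matrix has first column $(a_{11}(t),0,\dots,0)^T$ and its lower-right $(n-1)\times(n-1)$ block is exactly $B$. Expanding along the first column then yields $a_{11}(t)\det(B)$, so I obtain the identity $a_{11}(t)\det(B)=a_{11}(t)^{n-1}\det(A)$.

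Finally I would cancel a single factor of $a_{11}(t)$ from both sides to conclude $\det(B)=a_{11}(t)^{n-2}\det(A)$. The main point to justify, and the only place the hypothesis $a_{11}\neq 0$ is used, is this cancellation: it is legitimate precisely because $\Lambda=\Zeal[t,t^{-1}]$ is an integral domain, so a nonzero factor may be removed. I would also remark that, although the standard facts about the effect of elementary operations on determinants are usually stated over a field, the statements that $\mathcal{R}'_2$ scales the determinant by the chosen factor and that $\mathcal{R}_3$ preserves it, as well as cofactor expansion along a column, are polynomial identities in the matrix entries and therefore hold over any commutative ring, in particular over $\Lambda$. This is what makes the row-operation argument rigorous here rather than merely heuristic, and it is the only subtlety I expect; the rest is a direct computation.
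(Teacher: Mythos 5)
Your proof is correct and follows essentially the same route as the paper's: multiply rows $2,\dots,n$ by $a_{11}$ via $\mathcal{R}'_2$, clear the first column with $\mathcal{R}_3$, compare determinants to get $a_{11}\det(B)=a_{11}^{n-1}\det(A)$, and cancel one factor of $a_{11}$. You actually supply two details the paper leaves implicit --- that cancellation is legitimate because $\Lambda$ is an integral domain, and that the determinant identities for row operations hold over any commutative ring --- and you silently use the intended entry $a_{i1}(t)$ in place of the paper's typographical $a_{j1}(t)$ in the definition of $b_{ij}$, all of which is fine.
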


\begin{proof} By applying the elementary row operation $\mathcal{R}'_2$ (multiply $a_{11}$ to the second row) and then
applying the elementary row operation $\mathcal{R}_3$ (between the first row and the second row), one can obtained the second row of the right matrix below. Notice that to do this, we need to use auxiliary row $a_{21}\times$(the first row).
By applying repeatedly these operations to the remaining rows, we get the matrix on the right below.
{\tiny
\begin{equation*}
  \left(
    \begin{array}{cccc}
      a_{11}(t) & a_{12}(t)&\cdots & a_{1n}(t) \\
      a_{21}(t) & a_{22}(t)&\cdots & a_{2n}(t) \\
          \vdots& \vdots& \ddots & \vdots \\
      a_{n1}(t) & a_{n2}(t)&\cdots & a_{nn}(t) \\
    \end{array}
  \right)\Rightarrow
  \left(
    \begin{array}{cccc}
      a_{11}(t) & a_{12}(t)&\cdots & a_{1n}(t) \\
      0 & b_{22}(t)&\cdots & b_{2n}(t) \\
          \vdots& \vdots& \ddots & \vdots \\
      0 & b_{n2}(t)&\cdots & b_{nn}(t) \\
    \end{array}
  \right)
\end{equation*}}
Since $\mathcal{R}_3$ operation does not change the determinant and such a $\mathcal{R}'_2$ operation gives a multiple of $a_{11}$ to the determinant, we have the result by comparing the determinants of both sides.
\end{proof}

\begin{rem}\label{reducing1} In the above proposition, if the entries in the first column have common divisor $d(t)$, i.e., $a_{11}=a_{11}'(t)d(t), \cdots, a_{n1}=a_{n1}'(t)d(t)$, we can define
$$b_{ij}(t)=\det\left(\begin{array}{cc}
                      a_{11}'(t) & a_{1j}(t) \\
                      a_{j1}'(t) & a_{ij}(t) \\
                      \end{array}
                       \right)$$
In this case, $$a_{11}\det(B)=(a_{11}')^{n-1}\det(A).$$
\end{rem}

\section{Colorability by Alexander quandles}

Let $Q$ be an Alexander quandle.
Let $\phi:B_n\to GL(n,\Lambda)$ be the Burau representation for the braid group $B_n$, which is defined by
$$\phi(\sigma_i)=\left(
                  \begin{array}{cccc}
                    I_{n-1}      & \text{\bf 0} & \text{\bf 0} & \text{\bf 0} \\
                    \text{\bf 0} & 0            & 1            & \text{\bf 0} \\
                    \text{\bf 0} & t            & 1-t          & \text{\bf 0} \\
                    \text{\bf 0} & \text{\bf 0} & \text{\bf 0} & I_{n-i-2} \\
                  \end{array}
                \right), i=1,\cdots, n-1.$$

For a braid $w\in B_n$ , suppose that the diagram of $w$, presented by the standard generators, is colored by $Q$.
By reading the colors assigned to the top arcs of $w$ in the given order, we get an element $(c_1,c_2,\cdots,c_n)$ in $Q^n$.
Similarly, by reading the colors assigned to the bottom arcs of $w$ in the given order, we get another element $(d_1,d_2,\cdots,d_n)$ in $Q^n$. We call the element $(c_1,c_2,\cdots,c_n)$ associated with the top arcs of $w$ the {\it coloring of the braid} $w\in B_n$.
By comparing the definition of the coloring and the definition of the Burau representation, one can see that
$$\phi(w)(c_1,c_2,\cdots,c_n)^T=(d_1,d_2,\cdots,d_n)^T.$$
Notice that the coloring $(c_1,c_2,\cdots,c_n)$ of $w$ induces a coloring of the closure $\overline{w}$ if and only if $(c_1,c_2,\cdots,c_n)=(d_1,d_2,\cdots,d_n)$.

For $A\in M(n,\Lambda)$ an $n\times n$-matrix, we put $$E(A)=\{ (c_1,c_2,\cdots,c_n)\in Q^n | A(c_1,c_2,\cdots,c_n)^T=(c_1,c_2,\cdots,c_n)^T\}.$$
Then for any $w\in B_n$, $E(\phi(w))\not=\emptyset$ because
$\phi(w)(c,c,\cdots,c)=(c,c,\cdots,c)$ for all $c\in Q$.

Since $\phi(w)\in GL(n,\Lambda)\subset M(n,\Lambda)$ and $Q$ a $\Lambda$-module, $\phi(w)$ can be seen as a module homomorphism $\phi(w):Q^n\to Q^n$ defined by the matrix multiplication $\phi(w)(x)=\phi(w)x$ for all $x\in Q^n$. Since $E(\phi(w))$ is the kernel of $(\phi(w)-id)$, it is a submodule of $Q^n$.

\medskip
Let $D$ be a diagram of a link $L$ and $Q$ an Alexander quandle.
Let $\mathcal{VC}_D(Q)$ be the set of all colorings on a link diagram $D$ by $Q$.
Define the addition of colorings and scalar multiplication by
adding the quandle elements assigned on each arc of $D$ by $\mathcal{C}_1$ and $\mathcal{C}_2$
and by multiplying $\alpha(t)$ to the quandle elements assigned on each arc of $D$ by $\mathcal{C}_1$
where $\mathcal{C}_1$ and $\mathcal{C}_2$ are two colorings on a diagram $D$ and $\alpha(t)$ is in $\Lambda$. In~\cite{Inoue}, A. Inoue mentioned about the sum of colorings and scalar multiplication to colorings by the Alexander quandle $\Lambda_p/J$.

It is easy to show that $\mathcal{VC}_D(Q)$ is a $\Lambda$-module under the addition and the scalar multiplication, and is isomorphic to the submodule $E(\phi(w))$ of $\phi(w)$ of $Q^n$, where $D$ is the diagram of the closure $\overline{w}$ given by the braid diagram.

\bigskip

\begin{lem}\label{trivialcoloring} Let $\phi:B_n\to GL(n,\Lambda)$ be the Burau representation and $Q$ any non-trivial Alexander quandle.
If the closure of a braid $w\in B_n$ admits only the trivial coloring by $Q$, then $\text{rank}(\phi(w)-id)=n-1.$
In particular,
if $Q$ is torsion-free and finitely generated as a $\Lambda$-module, the converse holds.
\end{lem}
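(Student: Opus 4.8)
The plan is to translate the statement into linear algebra over $\Lambda$ and then to compare the kernel of $M:=\phi(w)-\mathrm{id}$ acting on $Q^n$ with its diagonal. As recorded just before the lemma, the colorings of $\overline{w}$ by $Q$ are exactly the elements of $E(\phi(w))=\ker(M\colon Q^n\to Q^n)$, and the trivial colorings form the diagonal submodule $\Delta_Q=\{(q,\dots,q):q\in Q\}\cong Q$. Since each Burau generator has row sums equal to $1$, we have $\phi(w)\mathbf{1}=\mathbf{1}$ and hence $M\mathbf{1}=0$ already over $\Lambda$; thus $M\mathbf{x}=0$ has the nonzero $\Lambda$-solution $\mathbf{1}$, and Proposition~\ref{rank-solution}(2) forces $\mathrm{rank}(M)\le n-1$ in every case. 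The content of the lemma is therefore to decide when this inequality is strict. Throughout I identify $\mathrm{rank}(M)$ (the number of pivots of a $\Lambda$-row-echelon form) with $\dim_{\mathbb{Q}(t)}$ of the column space, which is legitimate because the operations $\mathcal{R}_1,\mathcal{R}'_2,\mathcal{R}_3$ become field operations after the inclusion $\Lambda\hookrightarrow\mathbb{Q}(t)$.

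\emph{The converse (torsion-free, finitely generated $Q$).} Here I would argue directly. Because $Q$ is torsion-free over the domain $\Lambda$, the canonical map $Q\hookrightarrow V:=Q\otimes_\Lambda\mathbb{Q}(t)$ is injective, so $Q^n\hookrightarrow V^n$ compatibly with the $\Lambda$-linear action of $M$. Viewing $V^n=V\otimes_{\mathbb{Q}(t)}\mathbb{Q}(t)^n$ with $M$ acting on the second factor, one gets $\ker(M|_{V^n})=V\otimes_{\mathbb{Q}(t)}\ker(M|_{\mathbb{Q}(t)^n})$. The hypothesis $\mathrm{rank}(M)=n-1$ says $\dim_{\mathbb{Q}(t)}\ker(M|_{\mathbb{Q}(t)^n})=1$, and since $\mathbf{1}$ lies in this kernel it spans it; hence $\ker(M|_{V^n})=\{(v,\dots,v):v\in V\}=\Delta_V$. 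Intersecting with $Q^n$ and using $Q\hookrightarrow V$ gives $\ker(M|_{Q^n})=\Delta_Q$, i.e. only trivial colorings. This is the clean half, and torsion-freeness is used exactly once, to secure the embedding $Q\hookrightarrow V$.

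\emph{The forward implication (general non-trivial $Q$).} I would prove the contrapositive: if $\mathrm{rank}(M)\le n-2$, I must exhibit a non-trivial $Q$-coloring. Then the $\Lambda$-null module $N=\{\mathbf{u}\in\Lambda^n:M\mathbf{u}=0\}$ has rank $\ge 2$ and contains $\mathbf{1}$ together with an independent vector $\mathbf{v}$; as $\mathbf{1}$ is unimodular it is saturated, so $\mathbf{v}$ may be taken non-constant. The naive candidate is $q\cdot\mathbf{v}$ for a chosen $q\in Q$, which is always a coloring since $M(q\cdot\mathbf{v})=q\cdot(M\mathbf{v})=0$. The difficulty, and the main obstacle of the whole lemma, is that $q\cdot\mathbf{v}$ may collapse onto the diagonal when $Q$ has torsion: it is non-diagonal only if $q(v_i-v_j)\ne 0$ for some $i,j$, and for a badly chosen annihilator this can fail for every $q$ and every null vector.

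To overcome this I would pass to the specialization $t=1$, where $\phi(w)$ becomes the permutation matrix $P_\pi$ of the underlying permutation and $M(1)=P_\pi-\mathrm{id}$ has kernel of dimension equal to the number $\mu$ of components, spanned by the cycle-indicator vectors. Semicontinuity of rank under $t\mapsto 1$ gives $\mu\ge n-\mathrm{rank}(M)\ge 2$, so there is an integral non-constant vector $\mathbf{b}$ (for instance the indicator of one component) with $M(1)\mathbf{b}=0$, whence $M(t)\mathbf{b}=(t-1)\mathbf{c}$ for some $\mathbf{c}\in\Lambda^n$. For $q'\in Q$ the vector $q'\cdot\mathbf{b}$ then satisfies $M(q'\cdot\mathbf{b})=q'(t-1)\mathbf{c}$, so it is a coloring precisely when $q'$ is annihilated by the ideal $(t-1)(c_1,\dots,c_n)$, while it is non-diagonal as soon as $q'\ne 0$. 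Since $Q$ is non-trivial we have $(t-1)Q\ne 0$, and the plan is to use this non-vanishing to produce such a nonzero $q'$ in the required annihilator, thereby constructing the desired non-trivial coloring. Making this last step work uniformly for every non-trivial $Q$ — reconciling the $(t-1)$-torsion of $Q$ with the ideal $(t-1)(c_1,\dots,c_n)$, and, where necessary, combining $\mathbf{b}$ with genuine null vectors from $N$ — is the delicate point that I expect to absorb most of the work.
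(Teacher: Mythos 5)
Your proof of the converse is correct, and it takes a genuinely different and cleaner route than the paper's. The paper passes through the classification of finitely generated abelian groups (in fact conflating ``finitely generated as a $\Lambda$-module'' with ``finitely generated as an abelian group'' to conclude $Q=\Zeal\oplus\cdots\oplus\Zeal$), then reads off from a hypothetical non-trivial coloring a coordinate vector which, together with $(1,\cdots,1)$, gives two independent solutions of $(\phi(w)-id)\mathbf{x}=\mathbf{0}$, whence $\text{rank}(\phi(w)-id)<n-1$ by Proposition~\ref{rank-solution}. Your flat base change $Q\hookrightarrow V=Q\otimes_\Lambda\mathbb{Q}(t)$, with $\ker(M|_{V^n})=V\otimes_{\mathbb{Q}(t)}\ker(M|_{\mathbb{Q}(t)^n})$ spanned by the diagonal when $\text{rank}(M)=n-1$, uses only torsion-freeness (finite generation becomes superfluous) and avoids the coordinate bookkeeping entirely; this half is sound and arguably an improvement on the paper.

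The forward implication, however, has a genuine gap, and it sits exactly where you say it does. Your construction requires a nonzero $q'\in Q$ annihilated by the ideal $(t-1)(c_1,\dots,c_n)$, and no such $q'$ need exist: if $Q$ is torsion-free (e.g.\ $Q=\Lambda$ itself) and $\mathbf{c}\neq\mathbf{0}$, the only annihilated element is $0$; moreover the fact you invoke, $(t-1)Q\neq 0$, pulls in the \emph{opposite} direction from what you need. So the specialization trick covers precisely the quandles with $(t-1)$-torsion, the naive candidate $q\cdot\mathbf{v}$ covers the torsion-free ones, and in mixed cases (say $Q=\Lambda/(2)\cong\mathbb{F}_2[t,t^{-1}]$, a domain in which $(t-1)c_kq'=0$ forces $q'=0$ unless every $c_k$ vanishes mod $2$, while every $\Lambda$-null vector might a priori have coordinate differences divisible by $2$) neither construction is shown to produce a non-diagonal kernel vector; the promised ``combining $\mathbf{b}$ with genuine null vectors from $N$'' is never carried out, so as written this half is a plan, not a proof. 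For comparison, the paper's argument never leaves the class of colorings of the form $(x_1(t)q,\dots,x_n(t)q)$ with $(x_1(t),\dots,x_n(t))$ a $\Lambda$-null vector: since $\text{rank}\leq n-2$ yields at least two free variables in an echelon form, it adjusts the free-variable values, together with a choice of $x(t)$ satisfying $x(t)q\neq 0$, so that $x_1(t)q,\dots,x_n(t)q$ are not all equal --- a step the paper itself settles only ``without loss of generality.'' Your $t=1$ analysis is in effect the observation that this class of colorings can be too small when $Q$ has torsion (the kernel of $M(1)=P_\pi-id$ has rank $\mu\geq 2$ regardless), which is a sharper diagnosis of the difficulty than the paper offers; but diagnosing the obstruction does not discharge it, and the forward half of your proposal remains incomplete.
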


\begin{proof}
Suppose that  $\text{rank}(\phi(w)-id)<n-1$. By Proposition~\ref{rank-solution}, there exist $x_1(t), \cdots, x_n(t)\in\Lambda$, not all equal, such that
non-zero polynomials of $x_1(t), \cdots, x_n(t)$ are relatively prime and
{\tiny
\begin{equation}\label{XX}
  \left(
    \begin{array}{ccc}
      a_{11} & \cdots & a_{1n} \\
      \vdots & \ddots & \vdots \\
      a_{n1} & \cdots & a_{nn} \\
    \end{array}
  \right)\left(
           \begin{array}{c}
             x_1(t) \\
             \vdots \\
             x_n(t) \\
           \end{array}
         \right)
=\left(
           \begin{array}{c}
             0 \\
             \vdots \\
             0 \\
           \end{array}
         \right),
\end{equation}}
 where $\phi(w)-id=\left(
    \begin{array}{ccc}
      a_{11} & \cdots & a_{1n} \\
      \vdots & \ddots & \vdots \\
      a_{n1} & \cdots & a_{nn} \\
    \end{array}
  \right)$

Let $q\in Q$ be any non-zero element.
Since $<q>=\Lambda q\not=\{0\}$, there is $x(t)\in\Lambda$ such that $x(t)q\not=0.$
Since $\text{rank}(\phi(w)-id)<n-1$, by taking $x(t)$ as a solution corresponding to one of free-variables of $(\ref{XX})$, without loss of generality we may assume that $x_1(t)q,\cdots, x_n(t)q$ are not all zero.
Similarly one can assume that $x_1(t)q,\cdots, x_n(t)q$ are not all equal.
Since
{\tiny
\begin{equation*}
  \left(
    \begin{array}{ccc}
      a_{11}(t) & \cdots & a_{1n}(t) \\
      \vdots & \ddots & \vdots \\
      a_{n1}(t) & \cdots & a_{nn}(t) \\
    \end{array}
  \right)\left(
           \begin{array}{c}
             x_1(t)q \\
             \vdots \\
             x_n(t)q \\
           \end{array}
         \right)
=\left(
           \begin{array}{c}
             a_{11}(t)x_1(t)q+\cdots+a_{1n}(t)x_1(t)q \\
             \vdots \\
             a_{n1}(t)x_1(t)q+\cdots+a_{nn}(t)x_1(t)q \\
           \end{array}
         \right)
=\left(
           \begin{array}{c}
             0 \\
             \vdots \\
             0 \\
           \end{array}
         \right)
\end{equation*}}
 the coloring $(x_1(t)q,\cdots, x_n(t)q)$ of $w$ by $Q$ gives a non-trivial coloring of $\overline{w}$ by $Q$.

Suppose that $Q$ is finitely generated and torsion-free as a $\Lambda$-module
Since $Q$ is $\Lambda$-module which is an abelian group with a scalar multiplication, if $Q$ is finitely generated as an abelian group, by the classification theorem of abelian groups, we can see $Q$ as $\Lambda\oplus\cdots\Lambda\oplus\Lambda/J_{m_1}\oplus\cdots\oplus\Lambda/J_{m_k}$ where $J_{m_i}$ is an ideal.
Since $Q$ is torsion-free, $Q=\Zeal\oplus\cdots\oplus\Zeal$.

Suppose that $\overline{w}$ admits a non-trivial coloring
$((q_{11},\cdots,q_{1m}),\cdots,$ $(q_{n1},$ $\cdots,q_{nm})$ by $Q$, i.e.,
{\tiny $$(\phi(w)-id)\left(
           \begin{array}{c}
             (q_{11},\cdots,q_{1m}) \\
             \vdots \\
             (q_{n1},\cdots,q_{nm}) \\
           \end{array}
         \right)
=\left(
           \begin{array}{c}
             (0,\cdots,0) \\
             \vdots \\
             (0,\cdots,0) \\
           \end{array}
         \right)
$$}
Since $((q_{11},\cdots,q_{1m}),\cdots, (q_{n1},\cdots,q_{nm})$ is non-trivial coloring, there exists $j$ such that
$q_{j1}, \cdots, q_{jn}$ are not all equal. Since, in $\Lambda$,{\tiny $$\phi(w)\left(
           \begin{array}{c}
            q_{j1} \\
             \vdots \\
             q_{jn} \\
           \end{array}
         \right)
=\left(
           \begin{array}{c}
             q_{j1} \\
             \vdots \\
             q_{jn} \\
           \end{array}
         \right)
$$}
and since the equation $(\phi(w)-id){\bf x}={\bf 0}$
has two linearly independent solutions $(q_{j1},\cdots,q_{jn})$ and $(1,\cdots,1)$, $\text{rank}(\phi(w)-id)<n-1$ by Proposition~\ref{rank-solution}.
\end{proof}

It is known that, for a braid word $w\in B_n$,
\begin{eqnarray*}
  &&\phi(w) = C
\left[ \begin {array}{cc} \widetilde{\phi} \left( w \right) &\ast
\\\noalign{\medskip}0&1\end {array} \right]
C^{-1} \ \text{for some}\ \ C\in GL(n,\Lambda), \ \text{and}\\
 &&\det(\widetilde{\phi}(w)-id) = (1+t+\cdots+t^{n-1})\Delta_{L}(t)
\end{eqnarray*}
where $L$ is the closure of $w$ and $\Delta_{L}(t)$ is the reduced Alexander polynomial of the link $L$, see\cite{Birman},\cite{BurdeZieschang}.

\begin{thm}\label{zero-Alex} If the reduced Alexander polynomial $\Delta_{L}(t)$ of a link $L$ is zero, then
$L$ admits a non-trivial coloring by any Alexander quandle $Q$.
\end{thm}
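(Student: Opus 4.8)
The plan is to deduce the statement from Lemma~\ref{trivialcoloring} together with the two displayed facts about the (reduced) Burau representation. Present $L$ as the closure $\overline{w}$ of a braid $w \in B_n$, and fix a non-trivial Alexander quandle $Q$ (for the trivial one-element quandle there is nothing to prove). Since only the first implication of Lemma~\ref{trivialcoloring} is needed, no hypothesis on $Q$ beyond non-triviality is required. By the contrapositive of that implication, it suffices to prove that
\[
\text{rank}(\phi(w) - id) < n - 1,
\]
where the rank is taken over $\Lambda$; as $\Lambda$ is an integral domain this agrees with the rank over the fraction field $\mathbb{Q}(t)$, and I will compute it there.

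First I would use the decomposition $\phi(w) = C\begin{pmatrix}\widetilde{\phi}(w) & \ast \\ 0 & 1\end{pmatrix}C^{-1}$. Subtracting the identity and conjugating by $C \in GL(n,\Lambda)$, which preserves rank, gives
\[
\text{rank}(\phi(w) - id) = \text{rank}\begin{pmatrix}\widetilde{\phi}(w) - id & \ast \\ 0 & 0\end{pmatrix}.
\]
The formula $\det(\widetilde{\phi}(w) - id) = (1 + t + \cdots + t^{n-1})\Delta_{L}(t)$ then enters: since $\Delta_{L}(t) = 0$ while $1 + t + \cdots + t^{n-1} \neq 0$ in $\Lambda$, we obtain $\det(\widetilde{\phi}(w) - id) = 0$, so the $(n-1)\times(n-1)$ matrix $\widetilde{\phi}(w) - id$ has rank at most $n - 2$ over $\mathbb{Q}(t)$.

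The remaining step is to show that the off-diagonal column $\ast$ does not raise the rank, i.e. that $\text{rank}(\phi(w) - id) = \text{rank}(\widetilde{\phi}(w) - id) \le n - 2$. Here I would invoke the structural feature of the Burau representation that the all-ones vector $\mathbf{1} = (1, \dots, 1)^T$ is fixed by every $\phi(w)$ (as one checks directly on the generators $\phi(\sigma_i)$) and spans a line which is complementary, over $\mathbb{Q}(t)$, to the $(n-1)$-dimensional invariant subspace carrying $\widetilde{\phi}$. Consequently $\phi(w) - id$ is conjugate over $\mathbb{Q}(t)$ to $(\widetilde{\phi}(w) - id) \oplus 0$, whose rank equals $\text{rank}(\widetilde{\phi}(w) - id) \le n - 2 < n - 1$. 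Equivalently, the relation $(\phi(w) - id)\mathbf{1} = 0$ forces the column $\ast$ to lie in the column space of $\widetilde{\phi}(w) - id$, so appending it leaves the rank unchanged. Once $\text{rank}(\phi(w) - id) < n-1$ is established, Lemma~\ref{trivialcoloring} yields a non-trivial coloring of $\overline{w} = L$ by $Q$.

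The step I expect to be the main obstacle is precisely this last reduction: verifying that the fixed vector $\mathbf{1}$ is genuinely complementary to the reduced-Burau subspace (equivalently, that its final coordinate in the $C$-basis is non-zero), so that the decomposition is block-diagonal rather than merely block-triangular over $\mathbb{Q}(t)$. This is a standard property of the Burau representation recorded in \cite{Birman} and \cite{BurdeZieschang}, but it must be stated explicitly, because without it the extra column $\ast$ could in principle restore the rank to $n-1$ and the whole argument would collapse.
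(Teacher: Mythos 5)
Your proposal is correct and takes essentially the same route as the paper: present $L$ as $\overline{w}$, use the block decomposition of $\phi(w)-id$ together with $\det(\widetilde{\phi}(w)-id)=(1+t+\cdots+t^{n-1})\Delta_{L}(t)$ to get $\mathrm{rank}(\phi(w)-id)\leq n-2$, and conclude via Lemma~\ref{trivialcoloring}. The only difference is that you explicitly verify the step $\mathrm{rank}(\phi(w)-id)=\mathrm{rank}(\widetilde{\phi}(w)-id)$ (via the fixed vector $(1,\dots,1)^{T}$ being complementary to the reduced-Burau subspace, so the extra column $\ast$ cannot raise the rank), a point the paper asserts without justification---so your version actually closes a small gap rather than diverging from the argument.
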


\begin{proof}  Suppose that $L$ is the closure of $w\in B_n$.
Let $\phi:B_n\to GL(n,\Lambda)$ be the Burau representation.
Since $\phi(w)-id=C
\left[ \begin {array}{cc} \widetilde{\phi} \left( w \right)-id &\ast
\\\noalign{\medskip}0&0\end {array} \right]
C^{-1}$ and $\det(\widetilde{\phi}(w)-id)=(1+t+\cdots+t^{n-1})\Delta_{L}(t)$, $\Delta_{L}(t)=0$ if and only if
$\det(\widetilde{\phi}(w)-id)=0$ if and only if $\text{rank}(\phi(w)-id)=\text{rank}(\widetilde{\phi}(w)-id)\leq n-2$.

By Lemma~\ref{trivialcoloring}, $L$ admits a non-trivial coloring by any Alexander quandle $Q$.
\end{proof}

\begin{exa} {\rm $L9n27$ is the closure of $w=\sigma_3^{-1}\sigma_2^{-1}\sigma_1^2\sigma_2^{-1}\sigma_3\sigma_2\sigma_1^{-1}\sigma_2\sigma_1^{-1}\sigma_2\in B_4$, and its Alexander polynomial is vanishing. By the matrix calculation, one can see that  $(\phi(w)-id)$ can be changed to the following by the elementary row operations $\mathcal{R}_1, \mathcal{R}_2'$ and $\mathcal{R}_3$.
{\tiny $$\left[ \begin {array}{cccc} -{\frac {-1+4\,t-3\,{t}^{2}+{t}^{3}}{t}}&
-1+4\,t-3\,{t}^{2}+{t}^{3}&-{\frac {2-6\,t+7\,{t}^{2}-4\,{t}^{3}+{t}^{
4}}{t}}&-{\frac {-1+t}{t}}\\\noalign{\medskip}0&0&{\frac { \left( -1+t
 \right) ^{3}}{{t}^{2}}}&-{\frac { \left( -1+t \right) ^{3}}{{t}^{2}}}
\\\noalign{\medskip}0&0&0&0\\\noalign{\medskip}0&0&0&0\end {array}
 \right]
$$}
Since $\text{rank}(\phi(w)-id)=2$, $(\phi(w)-id)(x_1,x_2,x_3,x_4)^T=(0,0,0,0)^T$ has non-constant solutions, e.g., $(t,1,0,0)$
and $(1-2t,-1,1,1)$.
For any Alexander quandle $Q$ and $q\in Q$,
by coloring the top strands of the braid $w$ by $(1-2t)q,-q,q$ and $q$ in the given order, one can obtain a non-trivial coloring of $L9n27$ by $Q$ whenever $(1-2t)q,-q,q$ are not all equal in $Q$.}
\end{exa}

It is well-known that if $L$ is a split link, then $\Delta_{L}(t)=0$, but the converse does not hold.
There are 11 prime links with up to 11 crossings whose multi-variable Alexander polynomial is 0:  $L9n27$, $L10n32$, $L10n36$, $L10n107$, $L11n244$, $L11n247$, $L11n334$, $L11n381$, $L11n396$, $L11n404$ and $L11n406$ in Thistlethwaite Link Table. One can see that $\dim{E(\phi(w))}= 2$ for the above 11 links, where $w$ is the braid presentation of the link given in Thistlethwaite Link Table.
\bigskip

Now, assume that the reduced Alexander polynomial $\Delta_{L}(t)$ of $L$ is non-vanishing.
If $\Delta_{L}(t)=1$, $L$ admits only the trivial coloring by the quandle $\Lambda/(\Delta_{L}(t))$ because  $\Lambda/(\Delta_{L}(t))=\{1\}$.

\begin{thm} Let $L$ be a link with non-trivial and non-vanishing reduced Alexander polynomial $\Delta_{L}(t)$.
Then $L$ admits a non-trivial coloring by the Alexander quandle $\Lambda/(\Delta_{L}(t))$.
\end{thm}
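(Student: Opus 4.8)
The plan is to present $L$ as the closure $\overline{w}$ of a braid $w\in B_n$ and to read a non-trivial coloring of $L$ by $Q:=\Lambda/(\Delta_{L}(t))$ as a vector in $E(\phi(w))=\ker(\phi(w)-id)$ over $Q$ whose entries are not all equal; the trivial colorings are exactly the constant vectors $q\,(1,\dots,1)^{T}$. Note that since $\Delta_{L}(t)$ is a non-unit, $(\Delta_{L}(t))$ is a proper non-zero ideal of the UFD $\Lambda=\Zeal[t,t^{-1}]$, so $Q\neq 0$, and the change of basis $C\in GL(n,\Lambda)$ remains invertible over $Q$.

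First I would pass to the reduced Burau matrix $N:=\widetilde{\phi}(w)-id$. From $\phi(w)-id=C\left[\begin{smallmatrix}N&\ast\\0&0\end{smallmatrix}\right]C^{-1}$ one checks that for any $\mathbf{z}'\in\ker N$ the vector $\mathbf{x}:=C(\mathbf{z}',0)^{T}$ lies in $\ker(\phi(w)-id)$, and $\mathbf{x}\neq 0$ whenever $\mathbf{z}'\neq 0$. The key input is $\det N=(1+t+\cdots+t^{n-1})\Delta_{L}(t)\equiv 0\pmod{\Delta_{L}(t)}$, so $N$ is singular over $Q$. To produce an actual kernel vector I would row-reduce $N$ over $Q$ by $\mathcal{R}_{1},\mathcal{R}'_{2},\mathcal{R}_{3}$: if the rank drops below $n-1$, Proposition~\ref{rank-solution}(1) already gives non-zero solutions; otherwise the product of the $n-1$ pivots is $\det N$ times the (admissible) row factors used, hence $\equiv 0\pmod{\Delta_{L}(t)}$, and since $\Lambda$ is a UFD and $\Delta_{L}(t)$ is a non-unit, some pivot must share a non-trivial factor with $\Delta_{L}(t)$ (otherwise $\Delta_{L}(t)$ would divide a product coprime to it). Proposition~\ref{rank-solution}(3)(ii) then supplies a non-zero $\mathbf{z}'\in\ker N$ (equivalently, $\det N$ is a zero-divisor in $Q$, so $N$ has a non-trivial kernel), and hence a non-zero coloring $\mathbf{x}$.

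The substantive point is to show $\mathbf{x}$ is non-constant. Here I would exploit the invariant covector of the Burau representation: a direct computation on the generators gives $\ell\,\phi(\sigma_i)=\ell$ for $\ell:=(t^{n-1},t^{n-2},\dots,t,1)$, whence $\ell\,\phi(w)=\ell$ and $\ell\,(\phi(w)-id)=0$. Because $N=\widetilde{\phi}(w)-id$ is non-singular over the fraction field, the unique (up to scale) left $1$-eigenvector of $C^{-1}\phi(w)C$ is supported on the last coordinate, so $\ell C=(0,\dots,0,\beta)$ for some $\beta\in\Lambda$. Consequently $\ell\,\mathbf{x}=(\ell C)(\mathbf{z}',0)^{T}=0$. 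On the other hand a constant coloring $q\,(1,\dots,1)^{T}$ pairs with $\ell$ to give $q\,(1+t+\cdots+t^{n-1})$. Thus if $\mathbf{x}$ were trivial we would have $q\,(1+t+\cdots+t^{n-1})=0$ in $Q$; provided $1+t+\cdots+t^{n-1}$ is a non-zero-divisor modulo $\Delta_{L}(t)$ this forces $q=0$ and contradicts $\mathbf{x}\neq 0$. Hence $\mathbf{x}$ is a non-trivial coloring.

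I expect the main obstacle to be exactly the coprimality $\gcd\bigl(1+t+\cdots+t^{n-1},\,\Delta_{L}(t)\bigr)=1$ needed in the last step, which is what makes $1+t+\cdots+t^{n-1}$ a non-zero-divisor in $Q$. Since $1+t+\cdots+t^{n-1}=(t^{n}-1)/(t-1)$ is the product of the cyclotomic polynomials indexed by the divisors $d>1$ of $n$, while $\Delta_{L}(t)$ has only finitely many cyclotomic factors, I would remove the obstruction by choosing the braid representative on a suitable number of strands: any link is the closure of braids on arbitrarily many strands by Markov stabilization, and taking $n$ to be a prime exceeding the orders of all cyclotomic factors of $\Delta_{L}(t)$ makes $1+t+\cdots+t^{n-1}$ coprime to $\Delta_{L}(t)$. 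With such an $n$ fixed, the construction above goes through and produces the desired non-trivial coloring.
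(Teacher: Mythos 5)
Your argument is correct, but it takes a genuinely different route from the paper's. The paper never leaves the original braid index: it observes that every row of $\phi(w)-id$ sums to zero, so it suffices to look for solutions with $x_n=0$; it triangularizes the resulting $(n-1)\times(n-1)$ system \eqref{echelon}, uses Proposition~\ref{reducing} to show that $\Delta_L(t)$ divides the product of the diagonal pivots, concludes in the UFD $\Lambda$ that some pivot is not coprime to $\Delta_L(t)$, and then invokes Proposition~\ref{rank-solution} to get infinitely many solutions, which are automatically non-constant because their last coordinate is $0$. You instead extract a non-zero fixed vector directly from the reduced Burau block --- and here your parenthetical McCoy-type justification ($\det N\equiv 0$ in $Q$, hence $N$ has a non-trivial kernel over the non-zero ring $Q$) is the cleanest version, and I would lean on it rather than on your pivot branch, since the latter inherits the paper's unproved assertion that echelon form over $\Lambda/(f(t))$ is reachable using only row multipliers coprime to $f(t)$. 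You then rule out constancy by a device absent from the paper: the left-invariant covector $\ell=(t^{n-1},\dots,t,1)$, which indeed satisfies $\ell\,\phi(\sigma_i)=\ell$ on the generators, annihilates your solution $\mathbf{x}=C(\mathbf{z}',0)^T$, and pairs with a constant vector to give $q(1+t+\cdots+t^{n-1})$. The price is the coprimality hypothesis $\gcd\bigl(1+t+\cdots+t^{n-1},\Delta_L(t)\bigr)=1$, which can genuinely fail at the given index and which you correctly repair by Markov-stabilizing to a prime number of strands $n$ with $\Phi_n(t)\nmid\Delta_L(t)$; this is legitimate because the determinant formula $\det(\widetilde{\phi}(w)-id)=(1+t+\cdots+t^{n-1})\Delta_L(t)$ holds for every braid representative of $L$. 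What each approach buys: yours is more robust on the commutative-algebra side (no echelon machinery over $\Lambda/(f(t))$ is needed) and it isolates exactly where non-constancy comes from, via the left/right duality between $\ell$ and the eigenvector $(1,\dots,1)^T$; the paper's row-sum trick avoids stabilization entirely, produces colorings in the normal form $(x_1,\dots,x_{n-1},0)$ that underlies all the examples and the coloring table, and feeds the finer rank statements in the subsequent remarks (e.g.\ $\text{rank}(\phi(w)-id)=n-2$ in $\Lambda/(f(t))$ for an irreducible factor $f(t)$), which your existence-only argument does not recover.
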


\begin{proof} Let $w\in B_n$ be a braid presentation of $L$. Let $\phi : B_n\to GL(n,\Lambda)$ be the Burau representation.
Observe that the sum of entries in each row of the matrix $(\phi(w)-id)$ is zero.
Since $\Delta_{L}(t)\not=0$, $\text{rank}(\phi(w)-id)=n-1$, so that $(\phi(w)-id)$ can be changed to the matrix of the following form by elementary row operations $\mathcal{R}_1, \mathcal{R}_2', \mathcal{R}_3$, see Proposition~\ref{reducing} and Remark~\ref{reducing1}.

$$
    \begin{bmatrix}
    a_{11}(t) & a_{12}(t) & a_{13}(t) &\cdots & a_{1(n-1)}(t) & a_{1n}(t) \\
    0         & a_{22}(t) & a_{23}(t) &\cdots & a_{2(n-1)}(t) & a_{2n}(t) \\
    0         &      0    & a_{33}(t) &\cdots & a_{3(n-1)}(t) & a_{3n}(t) \\
    \vdots    & \vdots    & \vdots    &\ddots & \vdots & \vdots    \\
    0         &      0    &     0     &\cdots & a_{(n-1)(n-1)}(t) &  a_{(n-1)n}(t) \\
    0         &      0    &     0     &\cdots &  0 &  0
  \end{bmatrix}
$$
Since the sum of each row entries is zero, it is enough to solve the following system:
{\tiny
\begin{equation}\label{echelon}
\begin{bmatrix}
    a_{11}(t) & a_{12}(t) & a_{13}(t) &\cdots & a_{1(n-1)}(t) \\
    0         & a_{22}(t) & a_{23}(t) &\cdots & a_{2(n-1)}(t) \\
    0         &      0    & a_{33}(t) &\cdots & a_{3(n-1)}(t) \\
    \vdots    & \vdots    & \vdots    &\ddots & \vdots \\
    0         &      0    &     0     &\cdots & a_{(n-1)(n-1)}(t) \\
  \end{bmatrix}\left(
           \begin{array}{c}
            x_1(t) \\
             \vdots \\
             x_{n-1}(t) \\
           \end{array}
         \right)
=\left(
           \begin{array}{c}
             0 \\
             \vdots \\
             0 \\
           \end{array}
         \right),
\end{equation}
}

By applying Proposition~\ref{reducing} inductively, we have
\begin{eqnarray*}
   && a_{11}(t)a_{22}(t)\cdots a_{(n-1)(n-1)}(t) \\
  && \qquad =a_{11}(t)^{n-2}a_{22}(t)^{n-3}\cdots a_{(n-2)(n-2)}(t)^{1}\det(\widetilde{\phi}(w)-id)
\end{eqnarray*}

Since $\det(\widetilde{\phi}(w)-id)=\Delta_L(t)(1+t+\cdots+t^{n-1})$,
there exists $j$ such that $a_{jj}(t)$ is not relatively prime with $\Delta_L(t)$.
If $a_{jj}(t)$ is a multiple of $\Delta_L(t)$, it is zero in $\Lambda/(\Delta_L(t))$ so that
the rank of the coefficient matrix is less than $n-1$.
By Proposition~\ref{solution}(1), the system \ref{echelon} has infinitely many solutions.

If all diagonal entries which are not relatively prime with $\Delta_L(t)$ are not a multiple of $\Delta_L(t)$,
then the rank of the coefficient matrix is $n-1$. By Proposition~\ref{solution}(3)(ii), the system \ref{echelon} has infinitely many solutions.
\end{proof}

\begin{rem} From the proof of the above theorem, one can see that
\begin{itemize}
  \item[(1)] if $f(t)$ is a factor of $\Delta_{L}(t)$, then $L$ admits a non-trivial coloring by the Alexander quandle $\Lambda/(f(t))$.
  \item[(2)] in particular, if $f(t)$ is irreducible, then only one diagonal entry will be zero in $\Lambda/(f(t))$ so that $\text{rank}(\phi(w)-id)=n-2$ in $\Lambda/(f(t))$.
  \item[(3)] if $\text{rank}(\phi(w)-id)=n-2$ in $\Lambda/(f(t))$ and if $(x_1(t),\cdots,x_n(t))$ is a non-trivial coloring of $\overline{w}$ by $\Lambda/(f(t))$, all colorings of $\overline{w}$ by $\Lambda/(f(t))$ are linear combinations of $(x_1(t),\cdots,x_n(t))$ and $(1,\cdots,1)$.
\end{itemize}
\end{rem}

\begin{exa} {\rm
 The knot $8_{15}$ is the closure of $w=\sigma_1^2\sigma_2^{-1}\sigma_1\sigma_3\sigma_2^3\sigma_3$ in $B_4$ and its Alexander polynomial is $\Delta_{8_{15}}(t)=3-8t+11t^2-8t^3+3t^4=(3t^2-5t+3)(1-t+t^2)$. Notice that
$(\phi(w)-id)$ can be changed to the following matrix by elementary row operations $\mathcal{R}_1, \mathcal{R}_2', \mathcal{R}_3$.
{\tiny $$\left[ \begin {array}{cccc} -2+2\,t-{t}^{2}&- \left( -2\,{t}^{2}-1+3
\,t+{t}^{3} \right)  \left( -1+t \right) &1-t&6\,{t}^{2}+2-5\,t-3\,{t}
^{3}+{t}^{4}\\\noalign{\medskip}0& \left( -t+1+{t}^{2} \right)
 \left( {t}^{2}-3\,t+3 \right) &t-1-{t}^{2}&-6\,{t}^{2}+5\,t-2+4\,{t}^
{3}-{t}^{4}\\\noalign{\medskip}0&0&3\,{t}^{2}-5\,t+3&5\,t-3\,{t}^{2}-3
\\\noalign{\medskip}0&0&0&0\end {array} \right]
$$}
Note that every diagonal entries of the above matrix can not be zero in $\Lambda/(\Delta_{8_{15}}(t))$ so that $\text{rank}(\phi(w)-id)=n-1.$ Since the second and the third diagonal entries are not relatively prime with $\Delta_{8_{15}}(t)$,  $8_{15}$ admits infinitely many non-trivial colorings by $\Lambda/(\Delta_{8_{15}}(t))$ by Proposition~\ref{solution}.
For example, the colorings $(5t-3t^2-3, (-1+t)(3t^2-5t+3), 0, 0)$ and $(t(5t-3t^2-3), 5t-3t^2-3, 0, 0) $ of $w$ give non-trivial colorings of $8_{15}.$

For the irreducible factor $(1-t+t^2)$ of $\Delta_{8_{15}}(t)$, only the second diagonal entry is zero in $\Lambda/(1-t+t^2)$ so that $\text{rank}(\phi(w)-id)=n-2.$ Note that $(1, 1-t, 0, 0)$ is a non-trivial coloring of $8_{15}$ by $\Lambda/(1-t+t^2)$, and that all colorings of $8_{15}$ by $\Lambda/(1-t+t^2)$ are linear combinations of $(1, 1-t, 0, 0)$ and $(1,1,1,1)$.

For the other irreducible factor $(3t^2-5t+3)$ of $\Delta_{8_{15}}(t)$, one can obtain the similar result.
Indeed, $(-t^2+1,1,t^2-3 t+3,0)$ is a non-trivial coloring of $8_{15}$ by $\Lambda/(3-5t+3t^2)$, and all colorings of $8_{15}$ by $\Lambda/(3-5t+3t^2)$ are linear combinations of $(-t^2+1,1,t^2-3 t+3,0)$ and $(1,1,1,1)$.}
\end{exa}

\begin{rem}
Suppose that $f(t)$ is an irreducible factor of $\Delta_{L}(t)$  with multiplicity $k$.
Then $f(t)$ can be distributed over the $k$ diagonal entries of the matrix (\ref{echelon}), in maximum, so that
$$n-k-1\leq\text{rank}(\phi(w)-id)\leq n-2.$$
In particular, if $f(t)$ is with multiplicity $1$, then $\text{rank}(\phi(w)-id)= n-2.$
\end{rem}

\begin{exa}{\rm
(1) For the trefoil knot $3_1=\overline{\sigma_i^3}$, $\Delta_{3_1}(t)=1-t+t^2$ is irreducible, and hence $\text{rank}(\phi(w)-id)= 0.$
Hence one can choose any non-constant element, say $(1,0)$, in $(\Lambda/(\Delta_{3_1}(t)))^2$  as a non-trivial coloring of $3_1$ (the first arc is colored by $1$ and the second by $0$). Since
$\{(1,0),(1,1)\}$ generates $(\Lambda/(\Delta_{3_1}(t)))^2$, every element of $(\Lambda/(1-t+t^2))^2$ can be a coloring of $3_1$.

(2) The knot $8_{20}$ is the closure of $w=\sigma_1^3\sigma_2^{-1}\sigma_1^{-3}\sigma_2^{-1}$ in $B_4$ and $$\Delta_{8_{20}}(t)=1-2t+3t^2-2t^3+t^4=(1-t+t^2)^2.$$ One can see that
$(\phi(w)-id)$ can be changed to the following row-echelon form by elementary row operations $\mathcal{R}_1, \mathcal{R}_2', \mathcal{R}_3$.
{\tiny $$ \left[ \begin {array}{ccc} {\frac {-t+1+{t}^{2}}{{t}^{2}}}&{\frac {
 \left( -1+t \right) ^{2}}{{t}^{3}}}&-{\frac {1-t+{t}^{3}}{{t}^{3}}}
\\\noalign{\medskip}0&-{\frac {-t+1+{t}^{2}}{{t}^{2}}}&{\frac {-t+1+{t
}^{2}}{{t}^{2}}}\\\noalign{\medskip}0&0&0\end {array} \right]
$$}
Note that $\text{rank}(\phi(w)-id)= 2$ in $\Lambda/(\Delta_{8_{20}}(t))$. By Proposition~\ref{solution}, $8_{20}$ admits a non-trivial coloring by $Q=\Lambda/(\Delta_L(t))$, e.g., $(1-t+t^2, 0, 0)$.
Note that, in $\Lambda/(1-t+t^2)$, $\text{rank}(\phi(w)-id)= 1$  and that $(1, 0, 0)$ is a non-trivial coloring of $8_{20}$ by $Q=\Lambda/(1-t+t^2)$, and hence every coloring of $8_{20}$ by $Q=\Lambda/(1-t+t^2)$ is of the form
$\alpha(t)(1,0,0)+\beta(t)(1,1,1)=(\alpha(t)+\beta(t),\beta(t),\beta(t)).$}
\end{exa}

\newpage

\noindent{\bf Non-Trivial Coloring Table.}

\noindent The table in the last page is a list of non-trivial colorings, in which we used the braid notations and Alexander polynomials in {\it KnotInfo: Table of Knot Invariants~
\cite{ChaLivingston}}. In the table, the tuple in the column ``non-trivial coloring by the Alexander quandle $\Lambda/(\Delta_L(t))$''  denotes a coloring of the  braid, by the Alexander quandle $\Lambda/(\Delta_L(t))$, whose top arcs are colored by the tuple in the given order. Such a coloring of the braid gives a coloring of the link. All other colorings are obtained by linear combinations of them with the trivial coloring $(1,1,\cdots,1).$
\medskip

\centerline{\bf Non-Trivial Coloring Table}

\centerline{\tiny \begin{tabular}{|c|l|}
  \hline
   Name     & non-trivial coloring by Alexander quandle $\Lambda/(\Delta_L(t))$  \\
   \hline
  $3_1$ &  $(1,0)$ \\
  $4_1$ &  $(-1+t,-1+2t,0)$ \\
  $5_1$ &  $(1,0)$ \\
  $5_2$ &  $(3t-2,4-2t,0)$ \\
  $6_1$ &  $(2t-1,-1+t,3t-2,0)$ \\
  $6_2$ &  $(t^3-2t^2+2t-1, 2t^3-2t^2+2t-1, 0)$ \\
  $6_3$ &  $(-t^2+2t-1, -2t^2+2t-1, 0)$ \\
  $7_1$ &  $(1,0)$ \\
  $7_2$ &  $(11t-12,-5t+3,6t-9,0)$ \\
  $7_3$ &  $(t^3-t^2+3t-2, -2t^3+2t^2-2t+4, 0)$ \\
  $7_4$ &  $(-53t+76, 56t-32, -96t+128, 0)$ \\
  $7_5$ &  $(-t^3+2t^2-2t, 4t-4t^2-4+2t^3, 0)$ \\
  $7_6$ &  $(3t^3-6t^2+5t-1, 4t^3-6t^2+5t-1, -4t^2-1+4t+2t^3, 0)$ \\
  $7_7$ &  $(2t^3-7t^2+5t-1, 2t^3-8t^2+5t-1, -6t^2+2t^3-1+4t, 0)$ \\
  $8_1$ &  $(2t-2, 3t-2, -1+t, 4t-3, 0)$ \\
  $8_2$ &  $(t^5-2t^4+2t^3-2t^2+2t-1, t^5-2t^4+2t^3-2t^2+2t-1, 0)$ \\
  $8_3$ &  $(13t-12, 21t-12, 5t-4, 29t-20, 0)$ \\
  $8_4$ &  $(5t^3-t^2+3t-2, t^3-t^2+3t-2, 9t^3-9t^2+11t-6, 0)$ \\
  $8_5$ &  $(t^3-t^2+t-1, 2t^3-t^2+t-1, 0)$ \\
  $8_6$ &  $(2t^3-2t^2+2t-1, t^3-2t^2+2t-1, 3t^3-4t^2+4t-2, 0)$ \\
  $8_7$ &  $(-t^4+2t^3-2t^2+2t-1, -2t^4+2t^3-2t^2+2t-1, 0)$ \\
  $8_8$ &  $(-2t^2+2t-1, -t^2+2t-1, -3t^2+4t-2, 0)$ \\
  $8_9$ &  $(t^3-2t^2+2t-1, 2t^3-2t^2+2t-1, 0)$ \\
  $8_{10}$ &  $(-t^4+2t^3-3t^2+2t-1, -2t^4+3t^3-3t^2+2t-1, 0)$ \\
  $8_{11}$ &  $(3t^3-5t^2+5t-2, 2t^3-5t^2+5t-2, 4t^3-6t^2+5t-2, 0)$ \\
  $8_{12}$ &  $(t^3-4t^2+3t, t^3-4t^2+2t, -4t^2+4t+t^3-1, 2t^3-8t^2+6t-1, 0)$ \\
  $8_{13}$ &  $(-3t^3+3t^2-5t+2, -3t^3+7t^2-5t+2, -3t^3-t^2+3t-2, 0)$ \\
  $8_{14}$ &  $(3t^3-5t^2+5t-2, 2t^3-5t^2+5t-2, 4t^3-7t^2+6t-2, 0)$ \\
  $8_{15}$ &  $(5t-3t^2-3, (-1+t)(3t^2-5t+3), 0, 0),(t(5t-3t^2-3), 5t-3t^2-3, 0, 0)$\\
  $8_{16}$ &  $(-2t^4+3t^3-4t^2+3t-1, t^5-4t^4+5t^3-5t^2+3t-1, 0)$ \\
  $8_{17}$ &  $(2t^3-3t^2+3t-1, -t^4+4t^3-4t^2+3t-1, 0)$ \\
  $8_{18}$ &  $(2t-1, t^3-2t^2+3t-1, 0)$ \\
  $8_{19}$ &  $(t, -t^2+t+1, 0)$ \\
  $8_{20}$ &  $(1-t+t^2, 0, 0)$  \\
  \hline
\end{tabular}}

\end{document}